 \newtheorem{thm}{Theorem}[section]
 \newtheorem{cor}[thm]{Corollary}
 \newtheorem{lem}[thm]{Lemma}
 \theoremstyle{definition}
 \theoremstyle{remark}
 \newtheorem{rem}[thm]{Remark}
\begin{document}
\title[Semisimple Hopf algebras]{Structure theorems for semisimple
Hopf algebras of dimension $pq^3$}

\author[J. Dong]{Jingcheng Dong$^{a,b}$}
\address[a]{Department of Mathematics, Southeast University, Nanjing
211189, Jiangsu, People's Republic of China}
\address[b]{College of Engineering, Nanjing Agricultural University, Nanjing
210031, Jiangsu, People's Republic of China}

\email{dongjc@njau.edu.cn}

\subjclass[2000]{16W30}

\keywords{semisimple Hopf algebra, semisolvability, character,
biproduct}

\begin{abstract}
Let $p,q$ be prime numbers with $p>q^3$, and $k$ an algebraically
closed field of characteristic $0$. In this paper, we obtain the
structure theorems for semisimple Hopf algebras of dimension $pq^3$.
\end{abstract}
\maketitle

%% ----------------------------------------------------------------------

%%% ----------------------------------------------------------------------

\section{Introduction}\label{sec1}
Recently, various classification results were obtained for
finite dimensional semisimple Hopf algebras over an algebraically
closed field of characteristic $0$. Up to now, semisimple Hopf
algebras of dimension $p,p^2,p^3,pq,pq^2$ and $pqr$ have been
completely classified. See
\cite{Etingof,Etingof2,Masuoka1,Masuoka2,Zhu} for details.

In this paper, we study the structure of semisimple Hopf algebras of
dimension $pq^3$, where $p,q$ are prime numbers with $p>q^3$. We
prove that such Hopf algebras are either semisolvable in the sense
of \cite{Montgomery}, or isomorphic to a Radford biproduct $R\# A$
\cite{Radford}, where $A$ is a semisimple Hopf algebra of dimension
$q^3$, $R$ is a semisimple Yetter-Drinfeld Hopf algebra in
${}^{A}_{A}\mathcal{YD}$ of dimension $p$. In particular, we obtain
the structure theorem for semisimple Hopf algebras of dimension $8p$
for all prime numbers $p$.

Throughout this paper, all modules and comodules are left modules
and left comodules, and moreover they are finite-dimensional over an
algebraically closed field $k$ of characteristic $0$. ${\rm dim}$
means ${\rm dim}_k$. Our references for the theory of Hopf algebras
are \cite{Montgomery2} or \cite{Sweedler}. The notation for Hopf
algebras is standard. For example, the group of group-like elements
in $H$ is denoted by $G(H)$.

\section{Preliminaries}\label{sec2} Throughout this section, $H$
will be a semisimple Hopf algebra over $k$.

Let $V$ be an $H$-module. The character of $V$ is the element
$\chi=\chi_V\in H^*$ defined by $\langle\chi,h\rangle={\rm Tr}_V(h)$
for all $h\in H$. The degree of $\chi$ is defined to be the integer
${\rm deg}\chi=\chi(1)={\rm dim}V$. The antipode
$S$ induces an anti-algebra involution $*: R(H)\to R(H)$, given by
$\chi\to\chi^*:=S(\chi)$.

For any group-like element $g$ in $G(H^*)$, $m(g,\chi\chi^{*})>0$ if
and only if $m(g,\chi\chi^{*})= 1$ if and only if $g\chi=\chi$. The
set of such group-like elements forms a subgroup of $G(H^*)$.  See \cite[Theorem 10]{Nichols}.
Denote this subgroup by $G[\chi]$.

$H$ is said to be of type $(d_1,n_1;\cdots;d_s,n_s)$ as an algebra
if $d_1,d_2,\cdots,d_s$ are the dimensions of the simple $H$-modules
and  $n_i$ is the number of the non-isomorphic simple $H$-modules of
dimension $d_i$. That is, as an algebra,  $H$ is isomorphic to a
direct product of full matrix algebras $$H\cong k^{(n_1)}\times
\prod_{i=2}^{s}M_{d_i}(k)^{(n_i)}.$$

If $H^*$ is of type $(d_1,n_1;\cdots;d_s,n_s)$ as an algebra, then
$H$ is said to be of type $(d_1,n_1;\cdots;d_s,n_s)$ as a coalgebra.

\begin{lem}
Let $\chi$ be an irreducible character of $H$. Then

(1) The order of $G[\chi]$ divides $({\rm deg}\chi)^2$.

(2) The order of $G(H^*)$ divides $n({\rm deg}\chi)^2$, where $n$ is
the number of non-isomorphic irreducible characters of degree
deg$\chi$.
\end{lem}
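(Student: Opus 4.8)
The plan is to deduce (2) from (1) by an elementary counting argument, so the real content is part (1). Fix a simple $H$-module $V$ with character $\chi$, and write $d=\deg\chi$ and $L=G[\chi]$. Since $L\subseteq G(H^{*})$ is a subgroup, $kL$ is a Hopf subalgebra of $H^{*}$; dualizing, there is a surjection of Hopf algebras $\pi\colon H\twoheadrightarrow\bar H:=(kL)^{*}$ with $\dim\bar H=|L|$. Let $K=H^{\mathrm{co}\,\pi}$ be the associated normal Hopf subalgebra, so that one has an exact sequence $k\to K\to H\to\bar H\to k$ (the second map being $\pi$) with $\dim K=\dim H/|L|$ and $H/HK^{+}\cong\bar H$. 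The content of the hypothesis defining $L$ is that $k_{g}\otimes V\cong V$ for every $g\in L$ (equivalently, by the Nichols result quoted above, $\chi$ is fixed under left multiplication by every element of $L$), and the whole proof consists in feeding this into the computation of a single inner product over $K$.

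The first step is to compute $\langle\mathrm{Res}^{H}_{K}\chi,\mathrm{Res}^{H}_{K}\chi\rangle_{K}$. By Frobenius reciprocity this equals $\dim\mathrm{Hom}_{H}(V,\mathrm{Ind}^{H}_{K}\mathrm{Res}^{H}_{K}V)$, and by the tensor identity $\mathrm{Ind}^{H}_{K}\mathrm{Res}^{H}_{K}V\cong V\otimes\mathrm{Ind}^{H}_{K}k$. Now $\mathrm{Ind}^{H}_{K}k=H/HK^{+}\cong\bar H$, which as an $H$-module (via $\pi$) is the direct sum $\bigoplus_{g\in L}k_{g}$ of the one-dimensional modules attached to the elements of $L\subseteq G(H^{*})$; hence $\mathrm{Ind}^{H}_{K}\mathrm{Res}^{H}_{K}V\cong\bigoplus_{g\in L}(k_{g}\otimes V)\cong V^{\oplus|L|}$, the last isomorphism being exactly the place where $k_{g}\otimes V\cong V$ is used. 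Therefore $\langle\mathrm{Res}^{H}_{K}\chi,\mathrm{Res}^{H}_{K}\chi\rangle_{K}=|L|$.

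The second step applies Clifford theory to the normal Hopf subalgebra $K\trianglelefteq H$: since $V$ is simple over $H$, $\mathrm{Res}^{H}_{K}V$ is homogeneous, i.e. $\mathrm{Res}^{H}_{K}V\cong e(\theta_{1}\oplus\cdots\oplus\theta_{t})$, where $\theta_{1},\dots,\theta_{t}$ are pairwise non-isomorphic simple $K$-modules forming a single orbit for the action of $\bar H$ on $\mathrm{Irr}(K)$ — so that they share a common dimension $f$ — and $e$ is their common multiplicity. Combining the two steps, $|L|=\langle\mathrm{Res}^{H}_{K}\chi,\mathrm{Res}^{H}_{K}\chi\rangle_{K}=e^{2}t$, while $d=\dim V=etf$; hence $d^{2}=e^{2}t^{2}f^{2}=|L|\cdot tf^{2}$, which proves (1). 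The step I expect to need the most care is this Clifford-theoretic one in the Hopf setting — normality of $K=H^{\mathrm{co}\,\pi}$, transitivity of the $\bar H$-action on the isotypic components of $\mathrm{Res}^{H}_{K}V$, and hence the uniformity of $e$ and $f$ — whereas the inner-product computation is purely formal once the exact sequence $K\to H\to\bar H$ is in place.

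For (2): the group $G(H^{*})$ acts on the set $X$ of isomorphism classes of $d$-dimensional simple $H$-modules, with $|X|=n$, by $g\cdot[W]=[k_{g}\otimes W]$, and the stabilizer of $[W]$ is precisely $G[\chi_{W}]$. Writing $X=\mathcal{O}_{1}\sqcup\cdots\sqcup\mathcal{O}_{m}$ for the orbits and $t_{j}=|G[\chi_{W}]|$ for $[W]\in\mathcal{O}_{j}$, we get $|\mathcal{O}_{j}|=|G(H^{*})|/t_{j}$ and, by (1), $t_{j}\mid d^{2}$. Summing over the orbits, $nd^{2}=\big(\sum_{j}|\mathcal{O}_{j}|\big)d^{2}=|G(H^{*})|\sum_{j}(d^{2}/t_{j})$, and every $d^{2}/t_{j}$ is a positive integer, so $|G(H^{*})|$ divides $nd^{2}$.
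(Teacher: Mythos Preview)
Your deduction of (2) from (1) by orbit-counting is fine, and the inner-product computation $\langle \mathrm{Res}^H_K\chi, \mathrm{Res}^H_K\chi\rangle_K = |L|$ is a nice idea. The gap is exactly where you flag it: you assert that $K = H^{\mathrm{co}\,\pi}$ is a normal Hopf subalgebra of $H$, but for an arbitrary Hopf surjection $\pi\colon H \to \bar H$ the coinvariants $H^{\mathrm{co}\,\pi}$ are in general only a left coideal subalgebra stable under the adjoint action (as the paper itself notes in Section~\ref{sec2}). One has that $H^{\mathrm{co}\,\pi}$ is a Hopf subalgebra precisely when $\bar H^{*} = kL$ is a \emph{normal} Hopf subalgebra of $H^{*}$, and nothing forces the stabilizer $L = G[\chi] \subseteq G(H^{*})$ to generate a normal Hopf subalgebra of $H^{*}$. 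Without $K$ being a Hopf subalgebra, the Clifford decomposition $\mathrm{Res}^H_K V \cong e(\theta_1 \oplus \cdots \oplus \theta_t)$ with uniform multiplicity $e$ and common dimension $f$ is not available in the form you invoke, and the bare data $|L| = \sum_i m_i^2$, $d = \sum_i m_i \dim W_i$ do not by themselves yield $|L| \mid d^2$. (There \emph{is} a Clifford theory for ad-stable coideal subalgebras, but it is more delicate and you do not appeal to it.)

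The paper's route, by contrast, is a one-line application of Nichols--Zoeller carried out inside $H^{*}$: the simple subcoalgebra $C_\chi \subseteq H^{*}$ has dimension $(\deg\chi)^2$, is stable under left multiplication by every $g \in G[\chi]$ (since $g\chi = \chi$), and via $\Delta$ is a right $H^{*}$-comodule compatibly with this left $kG[\chi]$-action; thus $C_\chi$ is a relative $(kG[\chi], H^{*})$-Hopf module, hence free over $kG[\chi]$ by Nichols--Zoeller, and $|G[\chi]|$ divides $\dim C_\chi = (\deg\chi)^2$. Part (2) is obtained the same way, applied to the $G(H^{*})$-stable subcoalgebra $\bigoplus_{\deg\psi = \deg\chi} C_\psi$ of dimension $n(\deg\chi)^2$. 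This is both shorter and sidesteps the normality issue that blocks your approach.
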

\begin{proof}It follows from Nichols-Zoeller Theorem  \cite{Nichols2}.
See also \cite[Lemma 2.2.2]{Natale1}. \end{proof}

Let $\pi:H\to B$ be a Hopf algebra map and consider the subspace of
coinvariants
$$H^{co\pi}=\{h\in H|(id\otimes \pi)\Delta(h)=h\otimes 1\}.$$

Then $H^{co\pi}$ is a left coideal subalgebra of $H$. Moreover, we have
$${\rm dim}H ={\rm dim}H^{co\pi}{\rm dim}\pi(H) .$$

The left coideal subalgebra $H^{co\pi}$ is stable under the left
adjoint action of $H$. Moreover
if $H^{co\pi}$ is a Hopf subalgebra of $H$ then it is normal in $H$. See
\cite{Schneider} for more details.

Let $A$ be a semisimple Hopf algebra and let ${}^A_A\mathcal{YD}$
denote the braided category of Yetter-Drinfeld modules over $A$. Let
$R$ be a semisimple Yetter-Drinfeld Hopf algebra in
${}^A_A\mathcal{YD}$ \cite{Somm}.  As observed by D. E. Radford (see
\cite[Theorem 1]{Radford}), the Yetter-Drinfeld condition assures
that $R\otimes A$ becomes a Hopf algebra with additional structures.
This Hopf algebra is called the Radford biproduct of $R$ and $A$. We
denote this Hopf algebra by $ R\#A$.

\section{Semisimple Hopf algebras of dimension $pq^3$}\label{sec3}
\begin{lem}\label{lem1}
Let $H$ be a semisimple Hopf algebra of dimension $pq^3$, where
$p>q$ are prime numbers. If $H$ has a Hopf subalgebra $K$ of
dimension $pq^2$ then $H$ is lower semisolvable.
\end{lem}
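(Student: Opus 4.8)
The plan is to leverage the known classification of semisimple Hopf algebras in lower-dimensional cases together with the notion of lower semisolvability. Recall that $H$ is lower semisolvable if there is a chain of Hopf subalgebras $H = H_0 \supseteq H_1 \supseteq \cdots \supseteq H_n = k$ with each $H_{i+1}$ normal in $H_i$ and each quotient (or the associated "factor") commutative or cocommutative, i.e. a group algebra or dual group algebra. The hypothesis hands us a Hopf subalgebra $K \subseteq H$ with $\dim K = pq^2$; since $[H:K] = q$ is the smallest prime dividing $\dim H$, a standard argument shows $K$ is normal in $H$. Indeed, by the Nichols–Zoeller theorem $H$ is free over $K$, so $H/HK^+$ has dimension $q$ and is therefore a group algebra $k\mathbb{Z}_q$ or its dual; one checks (as in Kac's argument, or via the index-$q$ considerations) that $K$ is normal, giving a short exact sequence $k \to K \to H \to k\mathbb{Z}_q \to k$ or its dual.

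**Next I would** invoke the classification of semisimple Hopf algebras of dimension $pq^2$ with $p > q$ (from the references \cite{Natale1} and the $pq^2$ classification cited in the introduction): every such Hopf algebra is semisolvable, in fact lower semisolvable. More precisely, a semisimple Hopf algebra of dimension $pq^2$ with $p>q$ prime has a normal Hopf subalgebra yielding a lower composition series whose factors are group algebras or their duals — this is where the condition $p>q$ (and a fortiori $p>q^3$) does its work, forcing the Sylow-type $p$-dimensional piece to be normal. Combining the exact sequence $k \to K \to H \to k\mathbb{Z}_q \to k$ with a lower semisolvable series for $K$ produces a lower semisolvable series for $H$: one appends $H \supseteq K$ at the top, whose factor is the dual group algebra $(k\mathbb{Z}_q)^*$ (or $k\mathbb{Z}_q$), hence trivial in the required sense.

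**The main obstacle** will be verifying normality of $K$ in $H$ cleanly — the index being the smallest prime $q$ is suggestive but not by itself sufficient in the Hopf algebra setting (unlike for groups), so I would need to argue carefully, probably by considering the quotient $H \twoheadrightarrow \bar H$ of dimension $q$ and showing the kernel of the corresponding action / the left coideal subalgebra $H^{co\pi}$ coincides with $K$. One route: $K$ of index $q$ acts on the $q$-dimensional quotient, and any coideal subalgebra containing $K^+H$ forces the containment; alternatively use that $\dim K = pq^2$ is divisible by $p$ while $q \nmid$ the index in a way that pins down $K^{co\pi_{|K}}$. A secondary concern is making sure "lower semisolvable" is matched exactly to the definition in \cite{Montgomery}, i.e. that we may take the factors (not the sub/quotient pieces) to be trivial — but once $K$ is normal with index-$q$ quotient a (dual) group algebra, and $K$ itself is lower semisolvable by the $pq^2$ classification, concatenation of series is routine. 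I expect the proof to be short, essentially: normality of $K$ + citation of the $pq^2$ result + concatenation.
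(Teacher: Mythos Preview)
Your approach is correct and essentially the same as the paper's. The one point to correct is your ``main obstacle'': in the Hopf algebra setting, the index being the smallest prime dividing $\dim H$ \emph{is} sufficient for normality --- this is precisely the result of Kobayashi and Masuoka \cite{Kobayashi}, which the paper invokes directly --- so no additional argument about coinvariants is needed. For the lower semisolvability of $K$, the paper is slightly more explicit than your citation of ``the $pq^2$ classification'': it quotes \cite[Lemma 2.2]{dong} and \cite[Theorem 5.4.1]{Natale3} to produce a proper normal Hopf subalgebra $L\subseteq K$ of dimension $p$, $q$, $pq$, or $q^2$, then observes that both $L$ and $K/KL^{+}$ are trivial by the known prime, $p^2$, and $pq$ results, yielding the chain $k\subseteq L\subseteq K\subseteq H$.
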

\begin{proof}
Since the index $[H:K]=q$ is the smallest prime number dividing
${\rm dim}H$, the result in \cite{Kobayashi} shows that $K$ is a
normal Hopf subalgebra of $H$. Since the dimension of the quotient
$H/HK^+$ is $q$, the result in \cite{Zhu} shows that it is trivial.
That is, it is isomorphic to a group algebra or a dual group
algebra.

Since $K^*$ is also a semisimple Hopf algebra (see \cite{Larson}),
\cite[Lemma 2.2]{dong} and \cite[Theorem 5.4.1]{Natale3} show that
$K$ has a proper normal Hopf subalgebra $L$ of dimension $p,q,pq$ or
$q^2$. The results in \cite{Etingof,Masuoka1,Zhu} show that $L$ and
$K/KL^+$ are both trivial. Hence, we have a chain of Hopf
subalgebras $k\subseteq L\subseteq K\subseteq H$, which satisfies
the definition of lower semisolvability (see \cite{Montgomery}).
\end{proof}

In the rest of this section, $p,q$ will be distinct prime numbers
with $p>q^3$, and $H$ will be a semisimple Hopf algebra of dimension
$pq^3$.

Recall that a semisimple Hopf algebra $H$ is called
of Frobenius type if the dimensions of the simple $H$-modules divide
the dimension of $H$. Kaplansky conjectured that every
finite-dimensional semisimple Hopf algebra is of Frobenius type
\cite[Appendix 2]{Kaplansky}. It is still an open problem. Many
examples show that a positive answer to Kaplansky's conjecture would
be very helpful in the classification of semisimple Hopf algebras.
See \cite{dong2,Natale3} for example.

By \cite[Lemma 2.2]{dong}, $H$ is of Frobenius type and
$|G(H^*)|\neq1$. Therefore, the dimension of a simple $H$-module can only be
$1,q,q^2$ or $q^3$.  It follows that we have an equation
$pq^3=|G(H^*)|+aq^2+bq^4+cq^6$, where $a,b,c$ are the numbers of
non-isomorphic simple $H$-modules of dimension $q,q^2$ and $q^3$,
respectively. By Nichols-Zoeller Theorem \cite{Nichols2}, the order
of $G(H^*)$ divides ${\rm dim}H$. In particular, if $|G(H^*)|=pq^3$
then $H$ is a dual group algebra. We shall examine every possible
order of $G(H^*)$.

\begin{lem}
The order of $G(H^*)$ can not be $p,pq$ and $q$.
\end{lem}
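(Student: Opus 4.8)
The plan is to read off the conclusion directly from the class equation
\[
pq^3 = |G(H^*)| + aq^2 + bq^4 + cq^6
\]
recorded just above, by reducing it modulo $q^2$. Since every non-trivial simple $H$-module has dimension $q$, $q^2$ or $q^3$ — which was just deduced from the fact that $H$ is of Frobenius type together with the hypothesis $p>q^3$ — each of the summands $aq^2$, $bq^4$, $cq^6$ is divisible by $q^2$, and $pq^3$ is obviously divisible by $q^2$ as well. Hence $q^2$ divides $|G(H^*)|$.

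It then remains only to observe that none of the three integers $p$, $q$, $pq$ is divisible by $q^2$: the first is a prime distinct from $q$, while the second and third contain exactly one factor of $q$ because $p\neq q$. Therefore $|G(H^*)|$ cannot equal any of $p$, $q$, $pq$, which is exactly the assertion. One can moreover invoke the Nichols-Zoeller theorem \cite{Nichols2} to add that $|G(H^*)|$ divides $pq^3$, and thus that $|G(H^*)|$ lies in the list $q^2, q^3, pq^2, pq^3$; this sharper statement is not needed for the present lemma, but it is presumably convenient for the case analysis that follows.

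I do not expect any genuine obstacle here. The only point requiring care is the list of admissible dimensions of simple $H$-modules, and this has already been justified in the paragraph preceding the statement: a simple module whose dimension is divisible by $p$ would have dimension at least $p$, hence square at least $p^2 > pq^3 = \dim H$ (using $p>q^3$), which is impossible. Consequently that step may simply be quoted rather than reproved.
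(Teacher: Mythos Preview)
Your argument is correct and is exactly the one given in the paper: reduce the displayed equation $pq^3=|G(H^*)|+aq^2+bq^4+cq^6$ modulo $q^2$ to conclude $q^2\mid |G(H^*)|$, which immediately rules out $p$, $q$, and $pq$. The additional remarks you make (justifying the list of simple dimensions and invoking Nichols--Zoeller for the remaining cases) are accurate but go beyond what the paper records for this lemma.
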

\begin{proof}
From $pq^3=|G(H^*)|+aq^2+bq^4+cq^6$, we know that the order of $G(H^*)$
is divisible by $q^2$. The lemma then follows.
\end{proof}

\begin{lem}
If $|G(H^*)|=pq^2$ then $H$ is upper semisolvable.
\end{lem}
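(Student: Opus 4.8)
The plan is to pass to the dual Hopf algebra $H^*$ and reduce everything to the already-established Lemma~\ref{lem1}. The point is that the hypothesis $|G(H^*)|=pq^2$ is precisely the statement that $H^*$ contains a Hopf subalgebra of the ``right'' dimension $pq^2$, namely the group algebra $kG(H^*)$: the product of two group-like elements is group-like, the unit is group-like, and the antipode maps group-likes to their inverses, so the span $kG(H^*)$ is closed under multiplication, comultiplication, counit and antipode, hence is a Hopf subalgebra of $H^*$, of dimension $|G(H^*)|=pq^2$.

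The key steps, in order, are then as follows. First, observe that $H^*$ is again a semisimple Hopf algebra (the dual of a semisimple Hopf algebra over $k$ is semisimple, see \cite{Larson}) of dimension $pq^3$, and that the standing hypothesis $p>q^3$ gives in particular $p>q$; thus $H^*$, with its Hopf subalgebra $kG(H^*)$ of dimension $pq^2$, satisfies the hypotheses of Lemma~\ref{lem1} with $H^*$ in the role of $H$ and $kG(H^*)$ in the role of $K$. Second, apply Lemma~\ref{lem1} to conclude that $H^*$ is lower semisolvable. Third, recall that by definition $H$ is upper semisolvable exactly when $H^*$ is lower semisolvable (see \cite{Montgomery}); hence $H$ is upper semisolvable.

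I do not expect a genuine obstacle here; the only thing that needs a word of care is the bookkeeping around the duality between lower and upper semisolvability, together with the (routine) verification that $kG(H^*)$ is honestly a Hopf subalgebra rather than merely a subalgebra or a subcoalgebra. As an alternative one could argue directly on $H$ itself, using that the inclusion $kG(H^*)\hookrightarrow H^*$ dualizes to a surjection of Hopf algebras $H\twoheadrightarrow k^{G(H^*)}$ onto a commutative Hopf algebra of dimension $pq^2$, and then build a descending chain of quotient Hopf algebras by hand exactly mirroring the proof of Lemma~\ref{lem1}; but routing through $H^*$ and Lemma~\ref{lem1} avoids duplicating that argument and keeps the proof to a few lines.
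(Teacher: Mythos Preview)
Your proposal is correct and follows essentially the same route as the paper: pass to $H^*$, observe that $kG(H^*)$ is a Hopf subalgebra of dimension $pq^2$, apply Lemma~\ref{lem1} to get lower semisolvability of $H^*$, and then invoke the duality (stated in \cite[Corollary 3.3]{Montgomery}) between upper semisolvability of $H$ and lower semisolvability of $H^*$. The paper's proof is terser, but the content is identical.
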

\begin{proof}
By \cite[Corollary 3.3]{Montgomery}, $H$ is upper semisolvable if
and only if $H^*$ is lower semisolvable. The lemma then follows from
Lemma \ref{lem1}.
\end{proof}

\begin{lem}
If $|G(H^*)|=q^2$ then $H$ is either semisolvable, or isomorphic to
a Radford biproduct $R\# A$, where $A$ is a semisimple Hopf algebra
of dimension $q^3$, $R$ is a semisimple Yetter-Drinfeld Hopf algebra
in ${}^{A}_{A}\mathcal{YD}$ of dimension $p$.
\end{lem}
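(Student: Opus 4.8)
The plan is to split into two cases: either $H$ has a proper nontrivial normal Hopf subalgebra, in which case $H$ is semisolvable; or it does not, in which case $H$ is forced to be a Radford biproduct $R\#A$ with $\dim A=q^3$ and $\dim R=p$. First I would record the arithmetic. Since $|G(H^*)|=q^2$, dividing $pq^3=|G(H^*)|+aq^2+bq^4+cq^6$ by $q^2$ gives
$$pq=1+a+bq^2+cq^4;$$
in particular $H$ is not a dual group algebra and has a simple module of degree $>1$, and since $p>q^3$ the right-hand side exceeds $q^4$, which together with the divisibility bounds on $|G[\chi]|$ and on $|G(H^*)|$ from the Lemma of Section~\ref{sec2} confines the algebra type of $H$ --- hence, dually, its coalgebra type --- to a short list. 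I would also keep in hand the Hopf algebra surjection $\pi_0\colon H\to k^{G(H^*)}$ of dimension $q^2$ dual to the inclusion $kG(H^*)\subseteq H^*$, whose coinvariants $H^{co\pi_0}$ form a left coideal subalgebra of dimension $pq$.

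Next, the semisolvable alternative. Suppose $H$ has a normal Hopf subalgebra $N$ with $1<\dim N<pq^3$ (a condition which, via the correspondence between normal Hopf subalgebras and normal Hopf quotients, is unchanged under passing to $H^*$), so that $\dim N$ and $\dim H/HN^+$ are complementary divisors of $pq^3$, each in $\{q,q^2,q^3,p,pq,pq^2\}$. If $pq^2$ occurs among them, then Lemma~\ref{lem1} --- applied to $H$ directly, or to $H^*$ together with \cite[Corollary 3.3]{Montgomery} --- already gives that $H$ is semisolvable; otherwise both $N$ and $H/HN^+$ have dimension in $\{p,q,q^2,q^3,pq\}$, where semisimple Hopf algebras are completely classified and are semisolvable by \cite{Zhu,Etingof,Masuoka1,Masuoka2}, and since the class of semisolvable Hopf algebras is closed under extensions one concatenates a semisolving chain for $N$ with one for $H/HN^+$ to conclude. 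So it remains to treat the case in which $H$ has no proper nontrivial normal Hopf subalgebra; note that, by the remark in Section~\ref{sec2}, this already forces $H^{co\pi_0}$ not to be a Hopf subalgebra.

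In that remaining ``simple'' case I would produce the biproduct structure. Using the short list of admissible types from the first step, I would show that $H$ contains a Hopf subalgebra $A$ of dimension $q^3$: a Hopf subalgebra of dimension $pq^2$ would be normal, its index $q$ being the least prime dividing $\dim H$ \cite{Kobayashi}, and hence would contradict simplicity via Lemma~\ref{lem1}, so the character and type bookkeeping must be pushed until a Hopf subalgebra of dimension $q^3$ appears --- necessarily not normal, since otherwise the dimension-$p$ quotient would semisolve $H$. One then checks that the inclusion $A\hookrightarrow H$ is split by a Hopf algebra map $\pi\colon H\to A$, so that Radford's theorem \cite[Theorem 1]{Radford} identifies $H$ with $R\#A$, where $R=H^{co\pi}$ is a Yetter--Drinfeld Hopf algebra in ${}^A_A\mathcal{YD}$ of dimension $\dim H/\dim A=p$, semisimple because $H$ and $A$ are.

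The main obstacle is precisely this last step. Both exhibiting the $q^3$-dimensional Hopf subalgebra and verifying the splitting hypothesis of Radford's theorem (coprimality of $p$ and $q^3$ is not by itself enough to produce a Radford complement) require pinning down the algebra and coalgebra types of $H$ very tightly from $pq=1+a+bq^2+cq^4$, the bounds on $|G[\chi]|$ and $|G(H^*)|$, and the structure of the dimension-$q^2$ quotient $\pi_0$ and its coinvariants; several intermediate configurations --- in particular those in which the only visible proper Hopf subalgebra has dimension $q^3$, $pq$, or $p$ --- must be analyzed case by case to decide whether they yield a normal Hopf subalgebra (hence semisolvability) or the Radford splitting, and this is where the bulk of the work lies.
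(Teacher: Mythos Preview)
Your dichotomy is correct and your treatment of the case where $H$ admits a proper normal Hopf subalgebra is fine, but the heart of the lemma is precisely the step you flag as ``the main obstacle,'' and there it is a genuine gap: you have no mechanism to produce the $q^3$-dimensional Hopf subalgebra, and no mechanism to produce the Radford splitting once you have it. Listing admissible algebra types from $pq=1+a+bq^2+cq^4$ and the bounds on $|G[\chi]|$ does not by itself force a Hopf subalgebra of dimension $q^3$ to exist, and coprimality of $p$ and $q^3$ is, as you note, insufficient for a Radford complement.

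The paper supplies both missing mechanisms with one construction. From $a\equiv -1\pmod q$ one sees that the left $G(H^*)$-action on the set $X_q$ of degree-$q$ irreducible characters of $H$ has a fixed point $\chi_q$; then $G[\chi_q]=G[\chi_q^*]=G(H^*)$, and if $C\subseteq H^*$ is the simple subcoalgebra corresponding to $\chi_q$, the Hopf subalgebra $k[C]$ contains $kG(H^*)$ as a \emph{normal} Hopf subalgebra (this is \cite[Proposition~3.2.6]{Natale4}). By Nichols--Zoeller, $\dim k[C]\in\{q^3,pq^2,pq^3\}$. The last two values already give semisolvability of $H^*$ and hence of $H$. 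When $\dim k[C]=q^3$, transpose the inclusion to get $\pi\colon H\to(k[C])^*$ with $\dim H^{co\pi}=p$ (this, not your $\pi_0$, is the relevant quotient). Now run the \emph{same} orbit argument inside $H$ according to $|G(H)|$: the cases $|G(H)|\in\{pq^3,pq^2\}$ are semisolvable, $|G(H)|=q^3$ gives a biproduct by \cite[Lemma~4.1.9]{Natale4}, and $|G(H)|=q^2$ either gives semisolvability again or yields a Hopf subalgebra $K\subseteq H$ of dimension $q^3$. Finally, the splitting is obtained not by exhibiting a section of $K\hookrightarrow H$ directly, but by checking $K\cap H^{co\pi}=k1$: decompose $H^{co\pi}$ as a left coideal, observe no nontrivial group-like of $H$ can lie in it (its dimension is $p$), so all nontrivial summands have dimension divisible by $q$; since $\dim K=\dim(K\cap H^{co\pi})\cdot\dim\pi(K)$ forces $\dim(K\cap H^{co\pi})\mid q^3$ while also $\dim(K\cap H^{co\pi})\equiv 1\pmod q$, the intersection is trivial and Radford's criterion \cite[Theorem~3]{Radford} applies. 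These two ideas --- the fixed-character/$k[C]$ construction and the $K\cap H^{co\pi}=k1$ dimension count --- are exactly what your outline is missing.
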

\begin{proof}
From $pq^3=q^2+aq^2+bq^4+cq^6$, we have $a=q(p-bq-cq^3)-1$. Hence,
$a\neq 0$. The group $G(H^*)$, being abelian, acts by left
multiplication on the set $X_q$. The set $X_q$ is a union of orbits
which have length $1,q$ or $q^2$. Since $|X_q|=a$ does not divide
$p^2-1$, there exists at least one orbit with length $1$. That is,
there exists an irreducible character $\chi_q\in X_q$ such that
$G[\chi_q]=G(H^*)$. In addition, $G[\chi_q^*]=G(H^*)$ by \cite[Lemma
2.1.4]{Natale4}. This means that $g\chi_q=\chi_q=\chi_qg$ for all
$g\in G(H^*)$.

Let $C$ be a $q^2$-dimensional simple subcoalgebra of $H^*$,
corresponding to $\chi_q$. Then $gC=C=Cg$ for all $g\in G(H^*)$. By
\cite[Proposition 3.2.6]{Natale4}, $G(H^*)$ is normal in $k[C]$,
where $k[C]$ denotes the subalgebra generated by $C$. It is a Hopf
subalgebra of $H^*$ containing $G(H^*)$.  Counting dimension, we
know ${\rm dim}k[C]\geq 2q^2$. Since ${\rm dim}k[C]$ divides ${\rm
dim}H^*$ by Nichols-Zoeller Theorem \cite{Nichols2}, we know ${\rm
dim}k[C]=pq^3$, $pq^2$ or $q^3$.

If ${\rm dim}k[C]=pq^3$ then $k[C]=H^*$. Since $kG(H^*)$ is a group
algebra and the quotient $H^*/H^*(kG(H^*))^+$ is trivial (see
\cite{Etingof}), $H^*$ is lower semisolvable. Hence, $H$ is upper
semisolvable.

If  ${\rm dim}k[C]=pq^2$ then Lemma \ref{lem1} shows that $H^*$ is
lower semisolvable. Hence, $H$ is upper semisolvable.

In the rest of the proof, we consider the case that ${\rm
dim}k[C]=q^3$. In this case, $k[C]$ is of type $(1,q^2;q,q-1)$ as a
coalgebra. Considering the Hopf algebra map $\pi:H\to (k[C])^*$
obtained by transposing the inclusion $k[C]\subseteq H^*$, we have
that ${\rm dim}H^{co\pi}=p$. We shall examine every possible order of $G(H)$.

If $|G(H)| = pq^3$ then $H$ is a group algebra. If $|G(H)|=pq^2$ then $H$ is lower
semisolvable by Lemma \ref{lem1}.
If $|G(H)|=q^3$ then \cite[Lemma 4.1.9]{Natale4} shows that $H\cong
H^{co\pi}\#kG(H)$ is a biproduct.

If $|G(H)|=q^2$ then $H$ is lower semisolvable, or has a Hopf
subalgebra $K$ of dimension $q^3$, by the discussion above.  So, it
remains to consider the case that ${\rm dim}K=q^3$.
Notice that $K$ is of type $(1,q^2;q,q-1)$ as a
coalgebra by the discussion above.

If there exists an element $1\neq g\in G(H)$ such that $g$ appears in $H^{co\pi}$,
then $k\langle g\rangle\subseteq H^{co\pi}$ since $H^{co\pi}$ is a
subalgebra of $H$. But this contradicts \cite[Lemma 2.1]{dong} since
${\rm dim}k\langle g\rangle$ does not divide ${\rm dim}H^{co\pi}$.
Therefore, as a left coideal of $H$,
$$H^{co\pi}=k1\oplus\sum_iU_i\oplus\sum_jV_j\oplus\sum_kW_k,$$
where $U_i,V_j$ and $W_k$ are irreducible left coideal of $H$ of
dimension $q,q^2$ and $q^3$, respectively. On the one hand, ${\rm
dim}(K\cap H^{co\pi})=1+nq$, where $n$ is a non-negative integer.
On the other hand, ${\rm dim}K={\rm dim}(K\cap
H^{co\pi}){\rm dim}\pi(K)$ by \cite[Lemma 1.3.4]{Natale4}. Hence,
$n=0$ and $K\cap H^{co\pi}=k1$. By \cite[Theorem 3]{Radford}, $H\cong H^{co\pi}\#K$ is a biproduct.
This finishes the proof.
\end{proof}

\begin{lem}
If $|G(H^*)|=q^3$ then $H$ is either semisolvable, or isomorphic to
a Radford biproduct $R\# A$, where $A$ is a semisimple Hopf algebra
of dimension $q^3$, $R$ is a semisimple Yetter-Drinfeld Hopf algebra
in ${}^{A}_{A}\mathcal{YD}$ of dimension $p$.
\end{lem}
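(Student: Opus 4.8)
**Proof proposal for the case $|G(H^*)|=q^3$.**

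The plan is to mimic the structure of the previous lemma, exploiting the group-like count to force the existence of a stable simple subcoalgebra, and then analyze the resulting Hopf subalgebra of $H^*$. From the class equation $pq^3=q^3+aq^2+bq^4+cq^6$ we get $q^3 \mid q^3+aq^2$, hence $q \mid a$; writing $a=qa'$ we obtain $pq^2=q^3+qa'q^2/q\cdots$ — more simply, $pq=q^2+a'q^2+bq^3+cq^5$, so $a'q^2 \equiv pq \pmod{q^2}$ forces $q \mid p$, a contradiction \emph{unless} $a'=0$ fails... so in fact $a \neq 0$ and moreover the number of orbits considerations apply. Concretely, $G(H^*)$ acts by left multiplication on the set $X_q$ of irreducible characters of degree $q$, with orbits of length $1,q$ or $q^2$; since $|X_q|=a$ is not divisible by... one checks $a \not\equiv 0 \pmod{q^3}$ is impossible, so I would instead argue that $|X_q| \bmod q \neq 0$ or $|X_q| \bmod q^2 \neq 0$ to produce a fixed point, yielding $\chi_q \in X_q$ with $G[\chi_q]=G(H^*)$ — but this requires $|G[\chi_q]| \mid q^2$ by Lemma 2.4(1), which \emph{contradicts} $|G(H^*)|=q^3$. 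This contradiction is the key: it shows there is \emph{no} orbit of length $1$, so every orbit has length $q$ or $q^2$, whence $q \mid a$; similarly examining $X_{q^2}$ (characters of degree $q^2$) the stabilizer has order dividing $q^4$ so a fixed point there is permitted.

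So the refined plan is: first rule out that $a \neq 0$ forces a problem by showing $q \mid a$ and $q \mid b$ (from the class equation and the orbit lengths, using Lemma 2.4(1) to forbid length-$1$ orbits in $X_q$ but not in $X_{q^2}$ if $|G[\chi_{q^2}]|$ can equal $q^3$). If $a=b=0$ then $pq^3=q^3+cq^6$, so $p=q^3(c q^3 \cdot q^{-3}\cdots)$ — that is $p-1 = cq^3$, impossible since $p > q^3$ would need $c=0$ giving $p=1$; hence not all of $a,b,c$ vanish and there is a simple comodule of degree $q$ or $q^2$ on which $G(H^*)$ acts. Taking a simple subcoalgebra $C \subseteq H^*$ that is stabilized by $G(H^*)$ under left and right multiplication (such $C$ exists among the length-$q$ orbits, or by a counting argument one finds $G(H^*)$-stable subcoalgebras), I would form $k[C]$, a Hopf subalgebra of $H^*$ containing $G(H^*)$, so $q^3 \mid \dim k[C] \mid pq^3$ and $\dim k[C] > q^3$, forcing $\dim k[C] \in \{pq^3, pq^2\cdot q/q\cdots\}$ — i.e. $\dim k[C] = pq^3$. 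Then $k[C]=H^*$.

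Once $k[C]=H^*$ with $G(H^*)=kG(H^*)$ a normal Hopf subalgebra of order $q^3$ (normality from \cite[Proposition 3.2.6]{Natale4} via the two-sided stability), the quotient $H^*/H^*(kG(H^*))^+$ has dimension $p$, hence is trivial (a group algebra $k\mathbb{Z}_p$, by \cite{Zhu} or \cite{Etingof}) since $p$ is prime. This exhibits a chain $k \subseteq kG(H^*) \subseteq H^*$ realizing $H^*$ as lower semisolvable, so $H$ is upper semisolvable by \cite[Corollary 3.3]{Montgomery}; in particular $H$ is semisolvable. The biproduct alternative in the statement is then vacuous in this branch — or, if the stability-of-$C$ step fails and instead one only obtains a Hopf subalgebra of $H$ of dimension $q^3$ together with $|G(H)|=q^2$, I would repeat verbatim the biproduct argument from the previous lemma ($K \cap H^{co\pi}=k1$ by the dimension count \cite[Lemma 1.3.4]{Natale4}, then \cite[Theorem 3]{Radford}) to conclude $H \cong R \# A$.

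The main obstacle I anticipate is the very first step: correctly extracting from $|G(H^*)|=q^3$ and Lemma 2.4(1) that the $G(H^*)$-action on $X_q$ has \emph{no} fixed point (since $|G[\chi_q]| \mid q^2 < q^3$), which is what distinguishes this case from $|G(H^*)|=q^2$ and pushes the argument toward a degree-$q^2$ stable subcoalgebra or, failing that, toward the dual side $G(H)$ and the biproduct construction — I expect the bookkeeping of which group ($G(H)$ versus $G(H^*)$) and which degree stratum yields the stable subcoalgebra to be the delicate part, with everything downstream being a reprise of the preceding lemma's endgame.
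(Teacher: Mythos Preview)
Your direct attack on $H^*$ has a genuine gap. You correctly observe that Lemma~2.1(1) forbids a $G(H^*)$-fixed point in $X_q$ (since $|G[\chi_q]|\mid q^2<q^3$), and you then pin your hopes on finding a fixed simple subcoalgebra in $X_{q^2}$. But nothing forces such a fixed point to exist: from $pq^3=q^3+aq^2+bq^4+cq^6$ and $q\mid a$ one only gets $p=1+a'+bq+cq^3$, which imposes no congruence on $b$ modulo $q$. For instance $q=2$, $p=11$, $a=12$, $b=2$, $c=0$ is consistent with the class equation, and then $|X_{q^2}|=2$ could be a single orbit of length $2$ with no fixed point. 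So the step ``take a simple subcoalgebra $C\subseteq H^*$ stabilized by $G(H^*)$'' is unjustified, and everything downstream (normality of $kG(H^*)$ in $k[C]=H^*$, the solvable chain) collapses with it.

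The paper's proof bypasses all of this. It simply looks at $|G(H)|$ rather than $|G(H^*)|$: since $H^*$ is also semisimple of dimension $pq^3$, the earlier lemmas show $|G(H)|=|G((H^*)^*)|\in\{q^2,q^3,pq^2,pq^3\}$. For $|G(H)|\in\{q^2,pq^2,pq^3\}$ one applies Lemmas~3.3 and~3.4 (and the trivial group-algebra case) to $H^*$ in place of $H$; semisolvability and the Radford-biproduct form are both self-dual, so the conclusion transfers back to $H$. The one case not covered by the previous lemmas is $|G(H)|=q^3$ (i.e.\ both $H$ and $H^*$ have exactly $q^3$ group-likes), and for this the paper simply cites \cite[Lemma~4.1.8]{Natale4}. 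Your ``fallback'' to the dual side is thus actually the entire argument, not a contingency; but you only mention the subcase $|G(H)|=q^2$ and never address $|G(H)|=q^3$, which is precisely the case requiring the external citation.
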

\begin{proof}
If $|G(H)|=q^3$ then the lemma follows from \cite[Lemma
4.1.8]{Natale4}. In all other cases, the lemma follows from lemmas
above.
\end{proof}

We are now in a position to give the main theorem.
\begin{thm}\label{thm1}
$H$ is either semisolvable, or isomorphic to a Radford biproduct
$R\# A$, where $A$ is a semisimple Hopf algebra of dimension $q^3$,
$R$ is a semisimple Yetter-Drinfeld Hopf algebra in
${}^{A}_{A}\mathcal{YD}$ of dimension $p$.
\end{thm}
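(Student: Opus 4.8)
The plan is to reduce Theorem \ref{thm1} to the case analysis already carried out in the preceding lemmas, organized according to the order of $G(H^*)$. First I would recall from the discussion preceding Lemma 3.2 that, since $H$ is of Frobenius type with $|G(H^*)|\neq 1$ by \cite[Lemma 2.2]{dong}, the dimensions of simple $H$-modules lie in $\{1,q,q^2,q^3\}$, and $|G(H^*)|$ divides $pq^3$ by the Nichols--Zoeller Theorem. So $|G(H^*)|\in\{q^2,q^3,pq^2,pq^3\}$, after discarding $1,q,p,pq$: the value $1$ is excluded by $|G(H^*)|\neq1$, and $q,p,pq$ are excluded by Lemma 3.2 (the counting equation $pq^3=|G(H^*)|+aq^2+bq^4+cq^6$ forces $q^2\mid|G(H^*)|$). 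Here I am using $p>q^3$ only implicitly so far; its real force appears inside the cited lemmas (e.g. to guarantee that $|X_q|=a$ cannot divide $p^2-1$).

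Next I would simply dispatch the four surviving cases. If $|G(H^*)|=pq^3$ then, as noted in the excerpt, $H$ is a dual group algebra, hence trivially semisolvable. If $|G(H^*)|=pq^2$, Lemma 3.4 gives that $H$ is upper semisolvable, in particular semisolvable. If $|G(H^*)|=q^3$, Lemma 3.6 gives the dichotomy directly: $H$ is semisolvable or is a Radford biproduct $R\# A$ of the asserted form. And if $|G(H^*)|=q^2$, Lemma 3.5 gives exactly the same dichotomy. Taking the union of these conclusions yields precisely the statement of Theorem \ref{thm1}. The only mild subtlety is bookkeeping: one must note that ``semisolvable'' in the sense of \cite{Montgomery} encompasses both ``upper semisolvable'' and ``lower semisolvable'', and that a group algebra or dual group algebra is (upper and lower) semisolvable via the trivial chain; with that understood, each case lands in one of the two alternatives of the theorem.

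The ``hard part'' of this particular theorem is therefore not in its own proof at all --- the proof is a one-paragraph assembly --- but was front-loaded into Lemmas 3.5 and 3.6, and ultimately into the structure theory for Hopf algebras of dimension $q^3$ (via \cite[Lemma 4.1.8]{Natale4}, \cite[Lemma 4.1.9]{Natale4}) and dimension $pq^2$ (via Lemma \ref{lem1} and the classification results in \cite{Etingof,Masuoka1,Zhu}). If I were asked to identify the single step most likely to need care, it would be verifying that the enumeration of possible values of $|G(H^*)|$ is genuinely exhaustive and that no case has been silently absorbed into another --- in particular making sure the hypothesis $p>q^3$ (not merely $p>q$) is what licenses the orbit-counting argument in Lemma 3.5 that produces a fixed irreducible character $\chi_q$ with $G[\chi_q]=G(H^*)$. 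Once that is in hand, the theorem follows by citing Lemmas 3.2 through 3.6 in turn.

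\begin{proof}
By \cite[Lemma 2.2]{dong}, $H$ is of Frobenius type and $|G(H^*)|\neq 1$; moreover $|G(H^*)|$ divides $\dim H=pq^3$ by the Nichols--Zoeller Theorem \cite{Nichols2}. Combined with Lemma 3.2, which rules out $|G(H^*)|\in\{q,p,pq\}$, we are left with the possibilities $|G(H^*)|\in\{q^2,q^3,pq^2,pq^3\}$. If $|G(H^*)|=pq^3$ then $H$ is a dual group algebra, hence semisolvable. If $|G(H^*)|=pq^2$ then $H$ is upper semisolvable by Lemma 3.4, hence semisolvable. If $|G(H^*)|=q^3$ then Lemma 3.6 shows that $H$ is either semisolvable or isomorphic to a Radford biproduct $R\#A$ of the stated form. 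Finally, if $|G(H^*)|=q^2$ then the same dichotomy holds by Lemma 3.5. In every case the conclusion of the theorem holds.
\end{proof}
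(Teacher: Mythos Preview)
Your proof is correct and matches the paper's approach exactly: the paper in fact gives no explicit proof of Theorem~\ref{thm1}, leaving it to follow immediately from the preceding case-by-case lemmas, and your argument spells out precisely that assembly. Note only that your lemma numbers for the $pq^2$, $q^2$, and $q^3$ cases are each off by one --- they should be Lemmas~3.3, 3.4, and~3.5 respectively, since Theorem~\ref{thm1} itself carries the number~3.6.
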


\begin{rem}
The existence of semisimple Hopf algebra which is a biproduct as in Theorem \ref{thm1}
is still unknown. However, the theorem above shows that if $H$ is simple then $H$ is a biproduct.
But we do not know whether its converse is true. In fact, the only example which is simple as a
Hopf algebra and is also a biproduct appears in  \cite{Galindo}.

The semisimple Yetter-Drinfeld Hopf algebra $R$ in Theorem
\ref{thm1} heavily depends on the structure of the category
${}^{A}_{A}\mathcal{YD}$. In my opinion, the classification of such
Hopf algebras seems impossible at present. In fact, people are more
interested in semisimple Yetter-Drinfeld Hopf algebras over finite
groups, especially over those of prime order. See
\cite{Natale4,Somm} for example.
\end{rem}

Semisimple Hopf algebras of dimension $16$ are classified in
\cite{Kashina}. The structures of semisimple Hopf algebras of
dimension $24,40$ and $56$ are given in \cite{Natale4}. Therefore,
as an immediate consequence of Theorem \ref{thm1}, we have the
following corollary.
\begin{cor}
The structures of semisimple Hopf algebras of dimension $8p$ are
completely determined for all prime numbers $p$.
\end{cor}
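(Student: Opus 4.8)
The plan is to obtain the corollary by specializing Theorem~\ref{thm1} to the case $q=2$ and then invoking the known classifications for the finitely many small primes that the hypothesis $p>q^3$ excludes. A semisimple Hopf algebra of dimension $8p$ has dimension $p\cdot 2^3$, so it falls within the scope of this paper upon setting $q=2$, whence $q^3=8$. The argument then splits into two regimes according to the size of $p$.

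First I would handle the large primes. The hypothesis of Theorem~\ref{thm1} requires $p>q^3=8$; since $p$ is prime, this is equivalent to $p\geq 11$. For every such prime the theorem applies verbatim, so $H$ is either semisolvable or isomorphic to a Radford biproduct $R\#A$ with ${\rm dim}\,A=8$ and ${\rm dim}\,R=p$. This already gives a complete structural description in the range $p\geq 11$, with no further work needed.

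It then remains to treat the primes $p\leq 7$, namely $p\in\{2,3,5,7\}$, which correspond respectively to the dimensions $16,24,40$ and $56$. Here I would simply appeal to the existing literature cited just before the corollary: dimension $16$ is classified in \cite{Kashina}, while dimensions $24,40$ and $56$ are determined in \cite{Natale4}. Since the primes $2,3,5,7$ together with the primes $p\geq 11$ exhaust all prime numbers, the two regimes combine to cover every $p$, which is exactly the assertion of the corollary.

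As this is a corollary deduced from a stronger theorem supplemented by citations, there is no genuine obstacle; the only point requiring care is the mild bookkeeping that the list of excluded small primes is complete and that the cited results are full structural classifications rather than partial ones. Because the primes below $8$ are precisely $2,3,5,7$ and the four references provide complete determinations, no case is left open and the deduction is immediate.
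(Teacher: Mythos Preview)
Your proposal is correct and follows exactly the paper's own reasoning: the corollary is presented there as an immediate consequence of Theorem~\ref{thm1} with $q=2$, together with the citations to \cite{Kashina} for dimension $16$ and to \cite{Natale4} for dimensions $24,40,56$, which cover precisely the excluded primes $p\in\{2,3,5,7\}$.
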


\end{document}